\theoremstyle{plain}
\newtheorem{theorem}{Theorem}[section]
\newtheorem{lemma}[theorem]{Lemma}
\theoremstyle{remark}
\newtheorem{remark}[theorem]{Remark}
\begin{document}
\allowdisplaybreaks[4]
\numberwithin{figure}{section}
\numberwithin{table}{section}
 \numberwithin{equation}{section}
%
\title[Convergent Adaptive DG Methods]
 {Convergence Analysis of the Lowest Order  Weakly Penalized Adaptive Discontinuous Galerkin Methods}




\author{Thirupathi Gudi}
\address{Department of Mathematics, Indian Institute of Science, Bangalore 56002}
\email{gudi@math.iisc.ernet.in}
\author{Johnny Guzm\'an}
\address{Division of Applied Mathematics,
 Brown University, Providence, RI 02912}
\email{Johnny\_Guzman@brown.edu}

\date{}
\begin{abstract}
In this article, we prove convergence of the weakly penalized adaptive discontinuous Galerkin methods. Unlike other works, we derive the
contraction property for various discontinuous Galerkin methods only assuming the stabilizing parameters are large enough to stabilize the method. A central
idea in the analysis is to construct an auxiliary solution from the discontinuous Galerkin solution by a simple post processing. Based on the auxiliary
solution, we define the adaptive algorithm  which guides to the convergence of adaptive discontinuous Galerkin methods.
\end{abstract}
\keywords{contraction, adaptive finite element, discontinuous Galerkin}
\subjclass{65N30, 65N15}
\maketitle
\def\d{\displaystyle}
\def\R{\mathbb{R}}
\def\cA{\mathcal{A}}
\def\cB{\mathcal{B}}
\def\cS{\mathcal{S}}
\def\p{\partial}
\def\O{\Omega}
\def\bbP{\mathbb{P}}
\def\bP{{\bf P}}
\def\bv{{\bf v}}
\def\bV{{\bf V}}
\def\bu{{\bf u}}
\def\bw{{\bf w}}
\def\bff{{\bf f}}
\def\cM{\mathcal{M}}
\def\cT{\mathcal{T}}
\def\cV{\mathcal{V}}
\def\cE{\mathcal{E}}
\def\mean#1{\left\{\hskip -5pt\left\{#1\right\}\hskip -5pt\right\}}
\def\jump#1{\left[\hskip -3.5pt\left[#1\right]\hskip -3.5pt\right]}
\def\smean#1{\{\hskip -3pt\{#1\}\hskip -3pt\}}
\def\sjump#1{[\hskip -1.5pt[#1]\hskip -1.5pt]}
\section{Introduction}\label{sec:Intro}
The design of adaptive finite element methods based on reliable
and efficient {\em a posteriori} error estimates  has been the
subject in the past
\cite{AO:2000:Book,BS:2001:Book,BR:2003:Book,BScott:2008:FEM,Verfurth:1995:AdaptiveBook}.
The adaptive finite element method  consists typically the
following successive loops of the sequence
\begin{center}
{\bf SOLVE $\rightarrow$ ESTIMATE $\rightarrow$ MARK $\rightarrow$ REFINE}
\end{center}
Convergence analysis of adaptive finite element
methods has been initiated by D\"orfler \cite{Dorfler:1996:AFEM}
who introduced an important marking strategy. Subsequently
important  theoretical developments have been made by many
researchers. We refer to
\cite{MNS:2000:AFEM,MNS:2002:AFEM,CKNS:2008:AFEM} for the work on
conforming finite element methods, to
\cite{CH:2006:AMFEM,CHX:2006:AMFEM} for the results on mixed
finite element methods, to \cite{CH:2006:ANCFEM,BMS:2010:ANFEM}
for the work on nonconforming methods and finally to
\cite{KP:2007:ADG,HKW:2008:ADG,BN:2010:ADG} for discontinuous
Galerkin methods. On the other hand, the optimality of adaptive
finite element method is derived in \cite{BDD:2004:Optimal} for
two dimensional problems and in \cite{Stevenson:2007:Optimality}
for high dimensional problems.

In this article, we focus on the low order adaptive discontinuous Galerkin
(DG) methods. Karakashian and Pascal \cite{KP:2007:ADG} were the first to prove
contraction properties for the symmetric interior penalty Galerkin
(SIPG) method. Therein, the authors have proved the
contraction property for SIPG method under an interior node
property. Subsequently the interior node property is relaxed
independently in the works of \cite{HKW:2008:ADG} and
\cite{BN:2010:ADG}. Moreover the quasi-optimal convergence rates
are derived in \cite{BN:2010:ADG}. However, the common issue with the three
articles \cite{KP:2007:ADG,HKW:2008:ADG,BN:2010:ADG} is that the contraction
property is derived assuming the penalty parameters are sufficiently large (i.e. larger than what is needed
for stability of the method).  In this article, we prove contraction properties for various symmetric weakly penalized
discontinuous Galerkin methods only assuming that the penalty parameters are large enough to guarantee stability of the method. For example, in the case of
the LDG method the stabilizing parameters only have to be positive.
This is achieved by a new marking strategy that uses an auxiliary solution obtained by post-processing the discontinuous
Galerkin solution which turns out to be the Crouzeix-Raviart non-conforming approximation \cite{CR:1973:NCFEM}. In fact, we borrow the marking strategies \cite{CH:2006:ANCFEM,BMS:2010:ANFEM} that have been developed for non-conforming methods and show that these are enough to contract the error of the entire DG approximation.

The weakly penalized method differ from classic DG methods in the fact that only the lower moments of the jumps are penalized on interfaces of the triangulation. For example, for piecewise linear elements and the SIPG method the penalty term looks like
\begin{equation*}
\sum_{e\in \cE_h}\frac{\alpha}{h_e} \int_e \sjump{w_h} \sjump{v_h}.
\end{equation*}
In contrast, in the weakly penalized case, one uses the term
\begin{equation*}
\sum_{e\in
\cE_h}\frac{\alpha}{h_e} \int_e \Pi_e\big(\sjump{w_h}\big)\Pi_e\big(\sjump{v_h}\big)
\end{equation*}
where we use the average of the jump $\Pi_e\big(\sjump{w_h}\big)=\frac{1}{h_e} \int_e \sjump{w_h}$. Note that this is equivalent to using the midpoint rule to evaluate the integrals $\int_e \sjump{w_h} \sjump{v_h}$, so the weakly penalized method is cheaper to implement. Weak penalization has been used in weakly over penalized methods \cite{BO:2007:WOPNIP, BOS:2008:WOPSIP}.

We consider the following model problem of finding $u\in
H^1_0(\Omega)$ such that
\begin{equation}\label{eq:MP}
a(u,v)=(f,v)\quad \forall v\in H^1_0(\O),
\end{equation}
where
\begin{equation}\label{eq:aForm}
a(w,v)=(\nabla w,\nabla v)\qquad \forall w,\,v\in H^1_0(\O),
\end{equation}
and $(\cdot,\cdot)$ denotes the $L^2(\O)$ inner product. We assume
that $\O\subset\R^2$ is a bounded domain with polygonal boundary
$\p\O$ and $f\in L^2(\Omega)$.

The rest of the article is organized as follows. In Section
\ref{sec:Notation}, we introduce the notation and preliminary
results. In Section \ref{sec:DGMethods}, we recall the DG methods
and corresponding stability results. In Section
\ref{sec:AuxiliarySolution}, we construct an auxiliary solution by
averaging the DG solution and there in we derive some useful
properties and results for the auxiliary solution. Section
\ref{sec:Convergence} is devoted to the convergence analysis of DG
methods. Finally we conclude the article in Section
\ref{sec:Conclusions}.

\section{Notation  and Preliminaries}\label{sec:Notation}
The following notation will be used throughout the article:
\begin{align*}
\cT_h &=\text{a face to face, shape regular simplicial triangulations of } \Omega\\
T&=\text{a triangle of } \cT_h \qquad
 h_T=\text{diameter of } T\\
\cE_h^i&=\text{set of all interior edges of } \cT_h\\
\cE_h^b&=\text{set of all boundary edges of } \cT_h\\
\cE_h&=\cE_h^i\cup\cE_h^b\\
\cM_T&=\text{set of midpoints of the edges of } T\\
\cM_h^i&=\text{set of all midpoints of the edges in } \cE_h^i\\
\cM_h^b&=\text{set of all midpoints of the edges in } \cE_h^b\\
\cM_h&=\cM_h^i\cup\cM_h^b\\
h_e&= \text{ length of the edge } e\in\cE_h\\
\nabla_h &=\text{piecewise (element-wise) gradient}\\
\bbP_m(T)&=\text{space of polynomials of degree less than or equal to } m\geq 0 \text{ and defined on }T.
\end{align*}
The discontinuous finite element space is defined by
$$V_h=\{v_h\in L^2(\O): v_h|_T\in P_1(T)\}.$$
In the analysis below, we need the following Crouzeix-Raviart
nonconforming space \cite{CR:1973:NCFEM}:
$$V_{CR}=\{v_h\in V_h: \int_e \sjump{v_h}\,ds =0 \quad \forall e\in \cE_h\},$$
and the following vector valued discrete space:
$$W_h=\{w_h\in L^2(\O)^2: w_h|_T\in [P_0(T)]^2\}.$$
\par
Define a broken Sobolev space
\begin{eqnarray*}
H^1(\O,\cT_h)=\{v\in L_2(\Omega) :\,v_T= v|_{T}\in
H^1(T)\quad\forall\,~T\in{\mathcal T}_h\}.
\end{eqnarray*}
\par
For the DG methods, we require to define  jump and mean of
discontinuous functions.
 For any $e\in\cE_h^i$, there are two triangles $T_+$ and $T_-$ such that
 $e=\partial T_+\cap\partial T_-$.
 Let $n_-$ be the unit normal of $e$ pointing from $T_-$ to $T_+$, and $n_+=-n_-$.
 (cf. Fig.~\ref{Fig1}).
 For any $v\in H^1(\Omega,{\mathcal T}_h)$, we define the jump and mean of $v$ on $e$ by
\begin{eqnarray*}
 \sjump{v} = v_-n_-+v_+n_+,\,\,\mbox{and}\,\,\smean{v} = \frac{1}{2}(v_-+v_+),\,\mbox{respectively,}
\end{eqnarray*}
 where $v_\pm=v\big|_{T_\pm}$.
Similarly define for $w\in H^1(\Omega,{\mathcal T}_h)^2$ the jump
and mean of $w$ on $e\in\cE_h^i$ by
\begin{eqnarray*}
 \sjump{w} = w_-\cdot n_-+w_+\cdot n_+,\,\,\mbox{and}\,\,\smean{w} = \frac{1}{2}(w_-+w_+),\,\mbox{respectively,}
\end{eqnarray*}
 where $w_\pm=w |_{T_\pm}$.
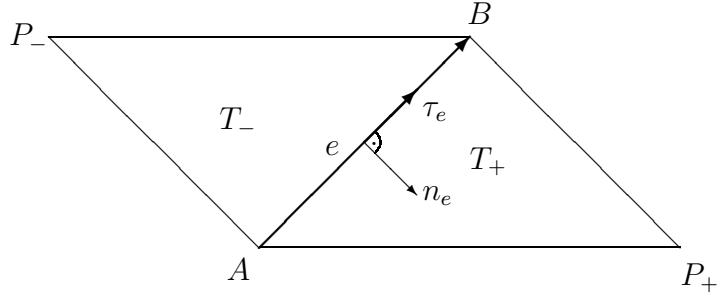
\begin{figure}[!hh]
\begin{center}
\setlength{\unitlength}{0.7cm}
\begin{picture}(8,6)
\put(2,1){\line(1,0){8}} \put(2,1){\line(-1,1){4}}
\put(6,5){\line(-1,0){8}} \put(10,1){\line(-1,1){4}}
\thicklines\put(2,1){\vector(1,1){4}}\thinlines
\put(1.39,0.39){$A$} \put(5.95,5.25){$B$} \put(10,0.29){$P_{+}$}
\put(-2.75,4.85){$P_{-}$} \put(1.25,3.25){$T_{-}$}
\put(6,2.5){$T_{+}$} \put(4,3){\vector(1,-1){1}}
\put(5.10,1.90){$n_{e}$}
\thicklines\put(4,3){\vector(1,1){1}}\thinlines
\put(5.10,3.50){$\tau_{e}$} \put(3.25,2.75){$e$}
\qbezier(4.2,2.8)(4.4,3.0)(4.2,3.2) \put(4.1,2.95){.}
\end{picture}
\caption{Two neighboring triangles $T_-$ and $T_{+}$ that share
the edge $e=\partial T_-\cap\partial T_+$ with initial node $A$
and end node $B$ and unit normal $n_e$. The orientation of $n_e =
n_{-} = -n_{+}$ equals the outer normal of $T_{-}$, and hence,
points into $T_{+}$.} \label{Fig1}
\end{center}
\end{figure}
\par
 For any edge $e\in \cE_h^b$,  there is a triangle $T\in\cT_h$ such that
 $e=\partial T\cap \partial\Omega$.
 Let $n_e$ be the unit normal of $e$ that points outside $T$.
 For any $v\in H^1(T)$, we set on $e\in \cE_h^b$
\begin{eqnarray*}
 \sjump{v}= vn_e\,\,\mbox{and}\,\,\smean{v}=v,
\end{eqnarray*}
and for $w\in H^1(T)^2$,
\begin{eqnarray*}
\sjump{w}= w\cdot n_e,\,\,\mbox{and}\,\,\smean{w}=w.
\end{eqnarray*}

The discontinuous Galerkin methods use a lifting operator
$r:L^2(\cE_h)^2 \rightarrow W_h $ defined by
\begin{align}\label{eq:Lifting}
\int_\Omega r(w)\cdot \tau \,dx = -\sum_{e\in\cE_h} \int_e w\cdot
\smean {\tau}\,ds \quad \forall \tau \in W_h,
\end{align}
and a local analogue $r_e:L^2(e)^2 \rightarrow W_h $ defined by
\begin{align}\label{eq:LiftingLocal}
\int_\Omega r_e(w)\cdot \tau \,dx = -\int_e w\cdot \smean {\tau}\,ds \quad \forall \tau \in W_h.
\end{align}
Let $\Pi_e:L^2(e)\rightarrow R$ be the $L^2$-projection onto
constants defined by
\begin{equation}\label{eq:Def-Pi0}
\Pi_e(v)=\frac{1}{h_e}\int_e v\,ds.
\end{equation}
\section{Discontinuous Galerkin Methods}\label{sec:DGMethods}
We consider four stable and symmetric weakly penalized
discontinuous Galerkin Methods namely, the IP (or SIPG method)
method \cite{DD:1976:IP,Wheeler::1978:IP,Arnold:1982:IPD}, the LDG
method \cite{CS:1998:LDG,ABCM:2002:UnifiedDG}, the method by
Brezzi et al. \cite{BMMPR:2000:DG} and the method by Bassi et al.
\cite{BRMPS:1997:DG}. The original articles have considered the
formulations using the penalty term with strong jumps, we replace
them here with weak jumps. The concept of stabilizing the DG
formulation by weak jumps was introduced in
\cite{BO:2007:WOPNIP,BOS:2008:WOPSIP}.

\par
\noindent The bilinear form for the IP method
\cite{DD:1976:IP,Wheeler::1978:IP,Arnold:1982:IPD} is defined by
\begin{align}\label{eq:FormIP}
\cA_h(w_h,v_h)&=\sum_{T\in \cT_h} \int_T \nabla w_h\cdot \nabla v_h\,dx -\sum_{e\in \cE_h}\int_e \smean{\nabla w_h}\sjump{v_h}\,ds\\
&-\sum_{e\in \cE_h}\int_e \smean{\nabla
v_h}\sjump{w_h}\,ds+\sum_{e\in
\cE_h}\frac{\alpha}{h_e} \int_e \Pi_e\big(\sjump{w_h}\big)\Pi_e\big(\sjump{v_h}\big).\notag
\end{align}
where $\alpha>0$ is the stabilizing parameter.
\par
\noindent The bilinear form for the LDG method
\cite{CS:1998:LDG,ABCM:2002:UnifiedDG} is defined  by
\begin{align}\label{eq:FormLDG}
\cA_h(w_h,v_h)&=\sum_{T\in \cT_h} \int_T \nabla w_h\cdot \nabla
v_h\,dx -\sum_{e\in \cE_h}\int_e \smean{\nabla w_h}\sjump{v_h}\,ds
-\sum_{e\in \cE_h}\int_e \smean{\nabla v_h}\sjump{w_h}\,ds
\\&\quad+\int_\O
r(\Pi_e(\sjump{w_h}))r(\Pi_e(\sjump{v_h}))+\sum_{e\in
\cE_h}\frac{\alpha}{h_e} \int_e \Pi_e\big(\sjump{w_h}\big)\Pi_e\big(\sjump{v_h}\big).\notag
\end{align}
The bilinear form for the Brezzi et al. method
\cite{BMMPR:2000:DG} is defined by
\begin{align}\label{eq:FormBrezzi}
\cA_h(w_h,v_h)&=\sum_{T\in \cT_h} \int_T \nabla w_h\cdot \nabla
v_h\,dx -\sum_{e\in \cE_h}\int_e \smean{\nabla w_h}\sjump{v_h}\,ds
-\sum_{e\in \cE_h}\int_e \smean{\nabla
v_h}\sjump{w_h}\,ds\\&\quad+\int_\O
r(\Pi_e(\sjump{w_h}))r(\Pi_e(\sjump{v_h}))+\sum_{e\in
\cE_h}\alpha \int_\Omega
r_e\big(\Pi_e(\sjump{w_h})\big)r_e\big(\Pi_e(\sjump{v_h})\big).\notag
\end{align}
The bilinear form for the Bassi et al. method \cite{BRMPS:1997:DG}
is defined by
\begin{align}\label{eq:FormBassi}
\cA_h(w_h,v_h)&=\sum_{T\in \cT_h} \int_T \nabla w_h\cdot \nabla
v_h\,dx -\sum_{e\in \cE_h}\int_e \smean{\nabla w_h}\sjump{v_h}\,ds
-\sum_{e\in \cE_h}\int_e \smean{\nabla v_h}\sjump{w_h}\,ds\\&\quad
+\sum_{e\in \cE_h}\alpha\int_\Omega
r_e\big(\Pi_e(\sjump{w_h})\big)r_e\big(\Pi_e(\sjump{v_h})\big).\notag
\end{align}
\par
\noindent The DG method is to find $u_h\in V_h$ such that
\begin{align}\label{eq:DGM}
\cA_h(u_h,v_h)=(f,v_h)\qquad \forall v_h\in V_h,
\end{align}
where $\cA_h$ is any of the bilinear form defined in
\eqref{eq:FormIP}-\eqref{eq:FormBassi}.
\par
\noindent It is proved in \cite[Lemma 1]{Ainsworth:2007:ApostDG}
that the IP method is stable for any $\alpha$ satisfying
\begin{align}\label{eq:CondSIPG}
\alpha> 4 \max_{T\in\cT_h} \rho(S_T),
\end{align}
where $\rho(S_T)$ is the spectral radius of the local stiffness
matrix $[S_T]_{mn}=(\nabla_h\lambda_m,\nabla_h\lambda_n)$, where
$\lambda_m$'s are barycentric coordinates of $T$. In the Table
\ref{table:alpha}, we present the condition on $\alpha$ for the
above DG methods.
\begin{table}[h!!]
 {\small{\footnotesize
\begin{center}
\begin{tabular}{|c|c|}\hline
   Method & Condition on $\alpha $   \\
\hline\\[-12pt]  &\\[-8pt]
IP Method \cite{DD:1976:IP,Wheeler::1978:IP,Arnold:1982:IPD} & $\alpha$ satisfies \eqref{eq:CondSIPG}   \\
LDG Method  \cite{CS:1998:LDG,ABCM:2002:UnifiedDG} & $\alpha>0$  \\
Brezzi et al. \cite{BMMPR:2000:DG} & $\alpha>0$    \\
Bassi et al.  \cite{BRMPS:1997:DG} & $\alpha>3$   \\
\hline
\end{tabular}
\end{center}
}}
\par\medskip
\caption{Conditions for $\alpha$} \label{table:alpha}
\end{table}

\par
\noindent Define the mesh dependent norm
\begin{align}\label{eq:hnorm}
\|v\|_{1,h}^2=\sum_{T\in\cT_h}\int_T |\nabla v|^2\,dx
+\sum_{e\in\cE_h}\Pi_e(\sjump{v})^2 \quad\forall v\in
H^1(\O,\cT_h).
\end{align}
The following lemma on the stability of the DG methods
\eqref{eq:DGM} is well-known \cite{ABCM:2002:UnifiedDG}.
\begin{lemma}\label{lem:Coercivity}
Assume that $\alpha$ satisfies the conditions in Table
$\ref{table:alpha}$. Then, it holds that
\begin{align*}
C\|v_h\|_{1,h}^2 \leq \cA_h(v_h,v_h)\qquad \forall v_h\in V_h.
\end{align*}
\end{lemma}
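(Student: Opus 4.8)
The plan is to prove the coercivity estimate $C\|v_h\|_{1,h}^2 \leq \cA_h(v_h,v_h)$ by treating the four bilinear forms together, isolating the common ``troublesome'' term, namely the two consistency terms $-2\sum_{e}\int_e \smean{\nabla v_h}\sjump{v_h}\,ds$, and then showing that in each case the remaining stabilization terms dominate it. First I would rewrite each $\cA_h(v_h,v_h)$ as
\begin{equation*}
\cA_h(v_h,v_h)=\|\nabla_h v_h\|_{L^2(\O)}^2 - 2\sum_{e\in\cE_h}\int_e \smean{\nabla v_h}\sjump{v_h}\,ds + \mathrm{Stab}(v_h),
\end{equation*}
where $\mathrm{Stab}(v_h)\ge 0$ collects the penalty and lifting contributions particular to each method. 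The essential observation, which I would record as a sub-step, is that because $\smean{\nabla v_h}$ is piecewise constant (as $v_h\in V_h$ is piecewise linear), we have $\int_e \smean{\nabla v_h}\sjump{v_h}\,ds = h_e\,\smean{\nabla v_h}\cdot n_e\,\Pi_e(\sjump{v_h})$; hence the consistency term only ever sees the \emph{weak} jump $\Pi_e(\sjump{v_h})$, which is exactly what the weak penalization controls. This is the key algebraic identity that makes the whole argument go through with merely ``stability-sized'' parameters.

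Next I would estimate the cross term. Using the definition of the lifting operators \eqref{eq:Lifting}--\eqref{eq:LiftingLocal} one writes $\sum_e \int_e \smean{\nabla v_h}\sjump{v_h}\,ds$ as an $L^2(\O)$ inner product against $\nabla_h v_h$: for the IP method one uses a discrete trace/inverse inequality on each edge together with the bound $\rho(S_T)$ on the local stiffness matrix exactly as in \cite{Ainsworth:2007:ApostDG}, Young's inequality with a parameter $\epsilon$, and then the condition $\alpha>4\max_T\rho(S_T)$ to absorb everything; for the LDG, Brezzi and Bassi forms one instead writes $-\sum_e\int_e\smean{\nabla v_h}\sjump{v_h}\,ds = \int_\O \nabla_h v_h\cdot r(\Pi_e(\sjump{v_h}))\,dx$ (or its local analogue $\sum_e\int_\O \nabla_h v_h\cdot r_e(\Pi_e(\sjump{v_h}))$, after summing, since $r=\sum_e r_e$), apply Cauchy--Schwarz and Young's inequality, and absorb the resulting $\|r(\Pi_e(\sjump{v_h}))\|_{L^2}^2$ (resp.\ $\sum_e\|r_e(\cdot)\|_{L^2}^2$) into the lifting stabilization, then control what is left by the $\alpha$-penalty term using the elementary bound $\|r_e(\Pi_e(\sjump{v_h}))\|_{L^2(\O)}^2 \le C h_e^{-1}\Pi_e(\sjump{v_h})^2$ and $\|r(\cdot)\|^2\le 3\sum_e\|r_e(\cdot)\|^2$ on a shape-regular mesh (this is where $\alpha>3$ enters for the Bassi et al.\ form, and why $\alpha>0$ suffices for LDG and Brezzi). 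Finally, collecting terms gives $\cA_h(v_h,v_h)\ge c_1\|\nabla_h v_h\|_{L^2}^2 + c_2\sum_e \Pi_e(\sjump{v_h})^2$ with $c_1,c_2>0$ depending only on shape regularity and the gap in the $\alpha$-condition, which is the claimed norm up to the constant $C=\min(c_1,c_2)$.

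The main obstacle is getting the constants to close using only the \emph{stability} threshold for $\alpha$ rather than a larger one: one must be careful that every term multiplying $\nabla_h v_h$ in the cross-term estimate is charged either to a fixed fraction $\epsilon\|\nabla_h v_h\|_{L^2}^2$ or to the lifting stabilization that is \emph{already present} in the form (so that no extra smallness of the penalty is consumed there), leaving the full $\alpha$-penalty available to absorb only the genuinely edge-localized remainder. For the IP method this bookkeeping is precisely the content of \cite[Lemma~1]{Ainsworth:2007:ApostDG}, and for the lifting-based methods it is standard once one observes $r=\sum_{e\in\cE_h} r_e$ and uses that on a shape-regular triangulation each edge belongs to at most two triangles; the rest is routine Cauchy--Schwarz and Young's inequality, so I would not grind through those details.
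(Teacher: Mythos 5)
The paper itself offers no proof of this lemma: it is stated as well known, with the coercivity of the weak-jump forms deferred to \cite{ABCM:2002:UnifiedDG} and, for the IP threshold \eqref{eq:CondSIPG}, to \cite[Lemma 1]{Ainsworth:2007:ApostDG}. So your sketch supplies an argument the paper only cites, and it is the right one: the standard unified-analysis coercivity proof adapted to weak penalization. Your pivotal identity is indeed the crux --- for $v_h\in V_h$ the quantity $\smean{\nabla v_h}$ is constant along each edge, so $\int_e \smean{\nabla v_h}\cdot\sjump{v_h}\,ds = h_e\,\smean{\nabla v_h}\cdot\Pi_e\big(\sjump{v_h}\big)$, hence the consistency term only ever sees the weak jump, which is exactly what the weak penalty and the liftings of $\Pi_e\big(\sjump{v_h}\big)$ stabilize; this is why the thresholds in Table \ref{table:alpha} are the same as for the strongly penalized forms in the lowest order case.

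Two spots in your sketch deserve more care than ``routine.'' First, for the LDG form the clean way to see that any $\alpha>0$ suffices is the exact identity $\cA_h(v_h,v_h)=\|\nabla_h v_h + r\big(\Pi_e(\sjump{v_h})\big)\|_{L^2(\O)}^2+\alpha\sum_{e\in\cE_h}\Pi_e\big(\sjump{v_h}\big)^2$ (complete the square using \eqref{eq:Lifting} with $\tau=\nabla_h v_h$), combined with the upper bound $\|r\big(\Pi_e(\sjump{v_h})\big)\|_{L^2(\O)}^2\le C\sum_{e\in\cE_h}\Pi_e\big(\sjump{v_h}\big)^2$; a plain Young step that tries to absorb $\epsilon^{-1}\|r(\cdot)\|^2$ into the single lifting term present in the form would otherwise reintroduce a largeness condition on $\alpha$. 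Second, and more substantively, for the Brezzi and Bassi forms the norm $\|\cdot\|_{1,h}$ of \eqref{eq:hnorm} contains $\sum_{e\in\cE_h}\Pi_e\big(\sjump{v_h}\big)^2$ while those bilinear forms contain no explicit $\alpha h_e^{-1}$ penalty, so besides the upper bound you quote, $\|r_e\big(\Pi_e(\sjump{v_h})\big)\|^2\lesssim \Pi_e\big(\sjump{v_h}\big)^2$, you also need the reverse inequality $\|r_e\big(\Pi_e(\sjump{v_h})\big)\|_{L^2(\O)}^2\ge c\,\Pi_e\big(\sjump{v_h}\big)^2$, which follows from \eqref{eq:LiftingLocal} tested with $\tau$ equal to the constant vector $\Pi_e\big(\sjump{v_h}\big)$ on the triangles sharing $e$, together with shape regularity. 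With these two points supplied, your argument closes and recovers exactly the stability statements the paper imports from the literature.
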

\section{An Auxiliary Solution by Post
Processing}\label{sec:AuxiliarySolution}

Let $u_h\in V_h$ be the solution of any of the DG methods
\eqref{eq:DGM}. Define an auxiliary solution $u_h^*\in V_{CR}$ by
the following:
\begin{equation}\label{eq:Def-uh*}
u_h^*(m_e) := \left\{ \begin{array}{ll} \smean{u_h}(m_e), & \text{if } m_e\in \cM_h^i\\\\
0, & \text{if } m_e\in \cM_h^b.
\end{array}\right.
\end{equation}
In the following lemma, we establish an integral relation for
$u_h$ and $u_h^*$.
\begin{lemma}\label{lem:Relations-uh-uh*}
For any $v_h\in V_h$, it holds that
\begin{align}\label{eq:Relations-uh-uh*}
\sum_{T\in \cT_h} \int_T \nabla u_h\cdot \nabla v_h\,dx -\sum_{e\in \cE_h}\int_e \smean{\nabla v_h}\sjump{u_h}\,ds=\sum_{T\in \cT_h} \int_T \nabla
u_h^*\cdot \nabla v_h\,ds.
\end{align}
\end{lemma}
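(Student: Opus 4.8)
The plan is to work entirely with the right-hand side and reduce everything to the defining property of $u_h^*$, namely $u_h^*(m_e) = \smean{u_h}(m_e)$ at interior midpoints and $u_h^*(m_e)=0$ at boundary midpoints. First I would integrate by parts element-wise on the right-hand side: since $\nabla v_h$ is piecewise constant (as $v_h \in V_h = P_1$), on each $T$ we have $\int_T \nabla u_h^*\cdot\nabla v_h\,dx = \int_{\partial T} u_h^* (\nabla v_h\cdot n_T)\,ds$ — no, more carefully, I would instead integrate by parts on the left-hand side term $\int_T \nabla u_h\cdot\nabla v_h\,dx$ and likewise on the right. Actually the cleanest route: integrate by parts on BOTH sides simultaneously. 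On each triangle, $\int_T \nabla w\cdot\nabla v_h\,dx = -\int_T w\,\Delta v_h\,dx + \int_{\partial T} w\,(\nabla v_h\cdot n_T)\,ds = \int_{\partial T} w\,(\nabla v_h\cdot n_T)\,ds$ because $\Delta v_h = 0$ on $T$ (piecewise linear). Thus summing over $T$,
\begin{equation*}
\sum_{T\in\cT_h}\int_T \nabla u_h\cdot\nabla v_h\,dx = \sum_{T\in\cT_h}\int_{\partial T} u_h\,(\nabla v_h\cdot n_T)\,ds = \sum_{e\in\cE_h}\int_e \smean{\nabla v_h}\sjump{u_h}\,ds + \sum_{e\in\cE_h^i}\int_e \jump{\nabla v_h}\smean{u_h}\,ds,
\end{equation*}
using the standard DG identity $\sum_T \int_{\partial T} ab = \sum_e \int_e (\smean{a}\sjump{b} + \sjump{a}\smean{b})$ with appropriate sign conventions (and on boundary edges only the $\smean{\nabla v_h}\sjump{u_h}$-type term survives since $\smean{u_h}$ pairs against $\sjump{\nabla v_h}$ which is not defined there — I will need to be careful that the boundary contribution is exactly $\int_e \smean{\nabla v_h}\sjump{u_h}$). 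Hence the left-hand side of \eqref{eq:Relations-uh-uh*} collapses to $\sum_{e\in\cE_h^i}\int_e \jump{\nabla v_h}\smean{u_h}\,ds$.

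Next I apply the same element-wise integration by parts to $u_h^*$: since $u_h^*\in V_{CR}\subset V_h$ is also piecewise linear,
\begin{equation*}
\sum_{T\in\cT_h}\int_T \nabla u_h^*\cdot\nabla v_h\,dx = \sum_{e\in\cE_h}\int_e \smean{\nabla v_h}\sjump{u_h^*}\,ds + \sum_{e\in\cE_h^i}\int_e \jump{\nabla v_h}\smean{u_h^*}\,ds.
\end{equation*}
Here the crucial point is that $\jump{\nabla v_h}$ and $\smean{\nabla v_h}$ are constants on each edge $e$, while $\nabla v_h\cdot n_e$ restricted from either side is constant; and the factors $\sjump{u_h^*}$ and $\smean{u_h^*}$ are affine along $e$, so their edge integrals are captured exactly by their midpoint values. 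Since $u_h^*\in V_{CR}$ we have $\int_e \sjump{u_h^*}\,ds = 0$ for every $e\in\cE_h$, which kills the first sum entirely. For the second sum, $\int_e \jump{\nabla v_h}\smean{u_h^*}\,ds = h_e\,\jump{\nabla v_h}\cdot\smean{u_h^*}(m_e) = h_e\,\jump{\nabla v_h}\cdot\smean{u_h}(m_e)$ by the definition \eqref{eq:Def-uh*} of $u_h^*$ at interior midpoints. On the other hand, $\int_e \jump{\nabla v_h}\smean{u_h}\,ds = h_e\,\jump{\nabla v_h}\cdot\smean{u_h}(m_e)$ as well, since $\jump{\nabla v_h}$ is constant on $e$ and $\smean{u_h}$ is affine on $e$ so its integral equals its midpoint value times $h_e$. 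Therefore the two second-sums agree term by term, and the identity follows.

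The main obstacle — really the only subtlety — is bookkeeping the boundary edges and the orientation of normals so that the DG integration-by-parts identity is applied consistently: I must check that on $e\in\cE_h^b$ the element-boundary term produces exactly $\int_e \smean{\nabla v_h}\sjump{u_h}\,ds$ with no leftover, and that the same holds with $u_h^*$ so these boundary terms cancel between the two sides (they appear with the same structure on both, and in fact both vanish against $\sjump{u_h^*}$ via $u_h^*\in V_{CR}$, or cancel against the matching term on the left). Once the algebra of jumps and means is set up with fixed sign conventions as in Figure \ref{Fig1}, every step is an exact evaluation — there are no approximations, because $\nabla v_h$ is piecewise constant and all other factors are affine along edges, so the midpoint quadrature is exact. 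I would write the computation so that the "$\smean{\nabla v_h}\sjump{\cdot}$" terms are manifestly identical on both sides and cancel, leaving only the "$\jump{\nabla v_h}\smean{\cdot}$" terms, which match by \eqref{eq:Def-uh*}.
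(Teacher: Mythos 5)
Your proof is correct and follows essentially the same route as the paper: element-wise integration by parts (the standard DG edge identity), then using that $\smean{\nabla v_h}$ and $\sjump{\nabla v_h}$ are edgewise constant so the edge integrals reduce to midpoint values, which vanish or match by the Crouzeix--Raviart property $\int_e \sjump{u_h^*}\,ds=0$ and the definition $u_h^*(m_e)=\smean{u_h}(m_e)$. The paper merely packages the same computation more compactly by integrating by parts on the difference $u_h-u_h^*$ rather than treating $u_h$ and $u_h^*$ separately.
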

\begin{proof}
Using integration by parts, we find
\begin{align*}
\sum_{T\in \cT_h} \int_T \nabla (u_h-u_h^*)\cdot \nabla v_h\,dx=\sum_{e\in \cE_h}\int_e \smean{\nabla v_h}\sjump{u_h-u_h^*}\,ds+\sum_{e\in \cE_h^i}\int_e
\sjump{\nabla v_h}\smean{u_h-u_h^*}\,ds.
\end{align*}
The definition of $u_h^*$ implies
\begin{align*}
\sum_{e\in \cE_h^i}\int_e \sjump{\nabla v_h}\smean{u_h-u_h^*}\,ds&=0,\\
\sum_{e\in \cE_h}\int_e \smean{\nabla v_h}\sjump{u_h^*}\,ds&=0.
\end{align*}
This completes the proof.
\end{proof}
In the following lemma, we estimate the error between $u_h$ and
$u_h^*$.
\begin{lemma}\label{lem:Error-uh-uh*}
It holds that
\begin{align*}
\sum_{T\in \cT_h}\left( h_T^{-2} \|u_h-u_h^*\|_{L^2(T)}^2 +
\|\nabla(u_h-u_h^*)\|_{L^2(T)}^2 \right)\leq C \sum_{e\in
\cE_h}\Pi_e\big(\sjump{u_h}\big)^2
\end{align*}
\end{lemma}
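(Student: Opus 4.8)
The plan is to estimate $u_h - u_h^*$ element by element using the fact that $u_h - u_h^*$ is a piecewise linear function whose behavior is controlled entirely by its values (or the values of its jump) at the edge midpoints. The key observation is that a linear function on a triangle $T$ is determined by its values at the three midpoints $\mathcal{M}_T$, and moreover the $L^2(T)$ and $H^1(T)$ norms of a piecewise linear function are equivalent (by scaling, with constants depending only on shape regularity) to a discrete $\ell^2$ norm of those midpoint values, weighted by appropriate powers of $h_T$. So I would first reduce both terms on the left-hand side to
\[
\sum_{T \in \cT_h}\left( h_T^{-2}\|u_h - u_h^*\|_{L^2(T)}^2 + \|\nabla(u_h-u_h^*)\|_{L^2(T)}^2\right) \le C \sum_{T\in\cT_h} \sum_{m_e \in \cM_T} \big| (u_h - u_h^*)|_T(m_e) \big|^2,
\]
which is a standard inverse/scaling estimate for $\mathbb{P}_1$ on a shape-regular triangle (prove it on the reference element and scale).

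Next I would identify the midpoint values. For an interior edge $e = \partial T_+ \cap \partial T_-$ with midpoint $m_e$, the definition \eqref{eq:Def-uh*} gives $u_h^*(m_e) = \smean{u_h}(m_e) = \tfrac12\big((u_h)_+(m_e) + (u_h)_-(m_e)\big)$, so on the side $T_\pm$ we get
\[
(u_h - u_h^*)|_{T_\pm}(m_e) = (u_h)_\pm(m_e) - \tfrac12\big((u_h)_+(m_e) + (u_h)_-(m_e)\big) = \pm\tfrac12\big((u_h)_+(m_e) - (u_h)_-(m_e)\big),
\]
whose absolute value is exactly $\tfrac12 |\jump{u_h}(m_e)|$ (the jump being a vector in the normal direction, so its magnitude is the scalar difference). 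For a boundary edge, $u_h^*(m_e) = 0$ and $(u_h - u_h^*)|_T(m_e) = (u_h)|_T(m_e)$, which again equals $|\jump{u_h}(m_e)|$ since on boundary edges $\jump{u_h} = u_h\, n_e$. Finally, because $\jump{u_h}$ is an affine function along each edge $e$, its value at the midpoint equals its edge average, i.e. $|\jump{u_h}(m_e)| = |\Pi_e(\sjump{u_h})|$. Substituting this into the displayed bound and regrouping the sum over $(T, m_e)$ pairs into a sum over edges (each edge contributing to at most two triangles) yields
\[
\sum_{T\in\cT_h}\sum_{m_e\in\cM_T} \big|(u_h-u_h^*)|_T(m_e)\big|^2 \le C \sum_{e\in\cE_h} \Pi_e(\sjump{u_h})^2,
\]
which is the claimed estimate.

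The only mild technical point — not really an obstacle — is keeping the scaling clean: the $h_T^{-2}$ weight on the $L^2$ term and the absence of a weight on the gradient term are both exactly what the reference-element scaling produces, so no $h$-powers are lost, and since $h_e \simeq h_T$ for any edge $e$ of $T$ by shape regularity, the passage from per-triangle midpoint data to the edge-indexed sum $\sum_e \Pi_e(\sjump{u_h})^2$ introduces only shape-regularity constants. I would also note that the identification $|\jump{u_h}(m_e)| = |\Pi_e(\sjump{u_h})|$ uses the midpoint rule being exact for affine integrands, which is precisely the structural feature exploited by the weakly penalized method, so it is natural that the right-hand side comes out in terms of $\Pi_e(\sjump{u_h})$ rather than the full $L^2$ jump.
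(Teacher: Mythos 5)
Your proposal is correct and follows essentially the same route as the paper: the paper's proof consists precisely of invoking the norm equivalence $\|v_h\|_{L^2(T)}^2 \leq C\,|T|\sum_{m_e\in\cM_T} v_h(m_e)^2$ for $v_h\in \bbP_1(T)$, and your argument is that estimate carried out in full, together with the (implicit in the paper) identification of the midpoint values of $u_h-u_h^*$ with $\tfrac12\Pi_e\big(\sjump{u_h}\big)$ via exactness of the midpoint rule for affine jumps. No gaps; you have simply supplied the details the paper leaves to the reader.
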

\begin{proof}
The proof is an easy consequence of the following estimate: for
any $v_h\in P_1(T)$, it holds that
\begin{align}
\|v_h\|_{L^2(T)}^2 \leq C |T|\sum_{m_e\in\cM_T} v_h(m_e)^2.
\end{align}
\end{proof}
The following identity is useful in our subsequent analysis.
\begin{lemma}\label{lem:MethodsRelation}
It holds that
\begin{align*}
\cA_h(u_h-u_h^*,u_h-u_h^*)=(f,u_h-u_h^*)
\end{align*}
\end{lemma}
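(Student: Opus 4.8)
The plan is to use the defining equation of the DG method \eqref{eq:DGM} together with the integral relation of Lemma~\ref{lem:Relations-uh-uh*}. First, I would write $\cA_h(u_h-u_h^*, u_h-u_h^*) = \cA_h(u_h, u_h-u_h^*) - \cA_h(u_h^*, u_h-u_h^*)$ and handle the two pieces separately. Since $u_h-u_h^*\in V_h$, the first piece equals $(f, u_h-u_h^*)$ by \eqref{eq:DGM}. So the entire task reduces to showing $\cA_h(u_h^*, u_h-u_h^*) = 0$.

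To see that $\cA_h(u_h^*, v_h) = 0$ for every $v_h\in V_h$: here I would use that $u_h^*\in V_{CR}$, so $\Pi_e(\sjump{u_h^*}) = \frac{1}{h_e}\int_e \sjump{u_h^*}\,ds = 0$ for every $e\in\cE_h$, by the definition of $V_{CR}$. This kills the last (penalty) term in each of the four bilinear forms \eqref{eq:FormIP}--\eqref{eq:FormBassi}, and it also kills any term built from $r(\Pi_e(\sjump{u_h^*}))$ or $r_e(\Pi_e(\sjump{u_h^*}))$ since the lifting operators are linear and vanish on the zero argument. It likewise kills the term $-\sum_{e\in\cE_h}\int_e \smean{\nabla v_h}\sjump{u_h^*}\,ds$, because $\nabla v_h$ is piecewise constant, so $\smean{\nabla v_h}$ is constant on $e$ and can be pulled out of the integral, leaving $\int_e\sjump{u_h^*}\,ds = 0$. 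What remains of $\cA_h(u_h^*,v_h)$ is therefore
\begin{equation*}
\sum_{T\in\cT_h}\int_T \nabla u_h^*\cdot\nabla v_h\,dx - \sum_{e\in\cE_h}\int_e \smean{\nabla u_h^*}\sjump{v_h}\,ds.
\end{equation*}
Now I would apply Lemma~\ref{lem:Relations-uh-uh*} with the roles adjusted: integrating by parts on each triangle gives $\sum_T\int_T\nabla u_h^*\cdot\nabla v_h\,dx = \sum_e\int_e\smean{\nabla u_h^*}\sjump{v_h}\,ds + \sum_{e\in\cE_h^i}\int_e\sjump{\nabla u_h^*}\smean{v_h}\,ds$, and since $u_h^*$ is piecewise linear its element gradients are constant, so $\int_e\sjump{\nabla u_h^*}\smean{v_h}\,ds = \sjump{\nabla u_h^*}\cdot\int_e\smean{v_h}\,ds$; but actually the cleaner route is to observe that $\sjump{\nabla u_h^*}$ paired against $\smean{v_h}$ need not vanish, so instead I would integrate by parts the other way — or simply note that the displayed expression above is exactly the left-hand side of \eqref{eq:Relations-uh-uh*} with $u_h$ replaced by $u_h^*$, and the argument in the proof of Lemma~\ref{lem:Relations-uh-uh*} shows it equals $\sum_T\int_T\nabla(u_h^*)^*\cdot\nabla v_h$ where $(u_h^*)^* = u_h^*$ since $u_h^*\in V_{CR}$ already has zero-mean jumps; one then checks this difference telescopes to zero.

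The cleanest formulation: from the proof of Lemma~\ref{lem:Relations-uh-uh*}, for any $w\in V_h$ one has $\sum_T\int_T\nabla w\cdot\nabla v_h - \sum_e\int_e\smean{\nabla v_h}\sjump{w} = \sum_T\int_T\nabla w^*\cdot\nabla v_h$ where $w^*$ is the Crouzeix--Raviart average of $w$; but this is not quite what appears in $\cA_h(u_h^*,v_h)$, which has $\smean{\nabla u_h^*}\sjump{v_h}$, not $\smean{\nabla v_h}\sjump{u_h^*}$. The resolution is that the symmetric form, evaluated at a $V_{CR}$ function in the first slot, reduces by the integration-by-parts identity to $\sum_{e\in\cE_h^i}\int_e\sjump{\nabla u_h^*}\smean{v_h}\,ds$ (the consistency-type terms cancelling against the volume term), and then the main obstacle — the only genuinely delicate point — is to argue this residual vanishes. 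This does \emph{not} follow from $u_h^*\in V_{CR}$ alone; rather it follows because $u_h^*$, being the average $\smean{u_h}$ at interior midpoints and $0$ at boundary midpoints, inherits enough of the variational structure of $u_h$. Here I would revisit Lemma~\ref{lem:Relations-uh-uh*}: taking $v_h = u_h - u_h^*$ there already packages the needed cancellation, so in fact the slickest proof is to expand $(f, u_h - u_h^*) = \cA_h(u_h, u_h - u_h^*)$, and separately show $\cA_h(u_h, u_h-u_h^*) = \cA_h(u_h - u_h^*, u_h - u_h^*)$ by verifying $\cA_h(u_h^*, u_h - u_h^*) = 0$ via the three reductions above (penalty terms and lifting terms vanish by zero-mean jumps; the remaining ``Laplacian part'' vanishes by Lemma~\ref{lem:Relations-uh-uh*} applied with the symmetric partner, using that both $u_h^*$ and the identity are available on $V_{CR}$). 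I expect the bookkeeping of which of the two symmetric DG terms survives, and matching it precisely to the statement of Lemma~\ref{lem:Relations-uh-uh*}, to be where care is required; everything else is immediate from linearity of $r$, $r_e$, $\Pi_e$ and the definition of $V_{CR}$.
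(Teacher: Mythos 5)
Your overall structure is the same as the paper's: reduce via \eqref{eq:DGM} to showing $\cA_h(u_h^*,u_h-u_h^*)=0$, then kill the penalty term, the lifting terms, and the consistency term $\sum_{e\in\cE_h}\int_e\smean{\nabla(u_h-u_h^*)}\sjump{u_h^*}\,ds$ using $\Pi_e(\sjump{u_h^*})=0$ and the fact that gradients of $V_h$-functions are constant on each edge. The gap is precisely at the step you yourself call ``the only genuinely delicate point''. After elementwise integration by parts (all gradients involved are piecewise constant), what survives is
\begin{equation*}
\cA_h(u_h^*,u_h-u_h^*)=\sum_{e\in\cE_h^i}\int_e \sjump{\nabla u_h^*}\,\smean{u_h-u_h^*}\,ds,
\end{equation*}
and your proposal never actually proves that this vanishes. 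Your opening claim that $\cA_h(u_h^*,v_h)=0$ for \emph{every} $v_h\in V_h$ is false: the residual $\sum_{e\in\cE_h^i}\int_e\sjump{\nabla u_h^*}\smean{v_h}\,ds$ does not vanish for general $v_h$, and $u_h^*\in V_{CR}$ alone is not enough (as you later concede). Your fallback, that ``taking $v_h=u_h-u_h^*$ in Lemma~\ref{lem:Relations-uh-uh*} already packages the needed cancellation'', does not close it either: Lemma~\ref{lem:Relations-uh-uh*} carries the term $\smean{\nabla v_h}\sjump{u_h}$, not $\smean{\nabla u_h^*}\sjump{v_h}$, so that substitution merely rewrites the volume term and leaves an identity equivalent to, not a proof of, the desired vanishing --- exactly the mismatch you noticed but did not resolve.

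The missing ingredient is the elementary observation the paper uses (it is the same one hidden in the proof of Lemma~\ref{lem:Relations-uh-uh*}): by the definition \eqref{eq:Def-uh*}, $\smean{u_h-u_h^*}$ is linear on each interior edge and vanishes at the midpoint $m_e$, while $\sjump{\nabla u_h^*}$ is constant on $e$; hence by the midpoint rule
\begin{equation*}
\int_e \sjump{\nabla u_h^*}\,\smean{u_h-u_h^*}\,ds = h_e\,\sjump{\nabla u_h^*}\,\smean{u_h-u_h^*}(m_e)=0 .
\end{equation*}
Equivalently, you could invoke Lemma~\ref{lem:Relations-uh-uh*} with the test function $v_h=u_h^*$ (not $v_h=u_h-u_h^*$): together with $\sum_{e\in\cE_h}\int_e\smean{\nabla u_h^*}\sjump{u_h^*}\,ds=0$ it gives $\sum_{T\in\cT_h}\int_T\nabla u_h^*\cdot\nabla(u_h-u_h^*)\,dx=\sum_{e\in\cE_h}\int_e\smean{\nabla u_h^*}\sjump{u_h-u_h^*}\,ds$, which is exactly the needed cancellation. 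With this one step supplied, your argument coincides with the paper's proof.
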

\begin{proof}
Using \eqref{eq:DGM}, we find
\begin{align*}
\cA_h(u_h-u_h^*,u_h-u_h^*)-(f,u_h-u_h^*)=-\cA_h(u_h^*,u_h-u_h^*)
\end{align*}
It is remaining to show that $\cA_h(u_h^*,u_h-u_h^*)=0$. Using the
fact that  $\Pi_e(\sjump{u_h^*})=0$ for all $e\in \cE_h$,
integration by parts and \eqref{eq:Def-uh*}, we obtain
\begin{align*}
\cA_h(u_h^*,u_h-u_h^*)&=\sum_{T\in\cT_h}\int_T \nabla u_h^*\cdot \nabla(u_h-u_h^*)dx-\sum_{e\in\cE_h}\int_e \smean{\nabla u_h^*}\sjump{u_h-u_h^*}ds\\
&=\sum_{e\in\cE_h^i}\int_e \sjump{\nabla u_h^*}\smean{u_h-u_h^*}ds\\
&=0.
\end{align*}
This completes the proof.
\end{proof}
\par
\noindent In the following lemma, we estimate the error
$u_h^*-u_h$ by volume residual.
\begin{lemma}\label{lem:JumpsbyVolume}
Assume that $\alpha$ satisfies the conditions in Table $\ref{table:alpha}$. Then there exists some $C^*>0$ such  that
\begin{align*}
\|u_h^*-u_h\|_{1,h}^2 \leq C^* \sum_{T\in\cT_h}h_T^2 \|f\|_{L^2(T)}^2.
\end{align*}
\end{lemma}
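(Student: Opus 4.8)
The plan is to combine the two preceding lemmas. Lemma~\ref{lem:MethodsRelation} gives the identity $\cA_h(u_h-u_h^*,u_h-u_h^*)=(f,u_h-u_h^*)$, and Lemma~\ref{lem:Coercivity} gives coercivity $C\|u_h-u_h^*\|_{1,h}^2\leq \cA_h(u_h-u_h^*,u_h-u_h^*)$ --- note that $u_h-u_h^*\in V_h$, so the coercivity estimate applies. Chaining these two facts yields
\begin{align*}
C\|u_h-u_h^*\|_{1,h}^2\leq (f,u_h-u_h^*)=\sum_{T\in\cT_h}\int_T f\,(u_h-u_h^*)\,dx.
\end{align*}
So everything reduces to bounding the right-hand side by $\|u_h-u_h^*\|_{1,h}$ times a multiple of $\big(\sum_{T}h_T^2\|f\|_{L^2(T)}^2\big)^{1/2}$.

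Next I would estimate the residual term element by element. On each triangle $T$, apply Cauchy--Schwarz to get $\int_T f(u_h-u_h^*)\,dx\leq \|f\|_{L^2(T)}\|u_h-u_h^*\|_{L^2(T)}\leq h_T\|f\|_{L^2(T)}\cdot h_T^{-1}\|u_h-u_h^*\|_{L^2(T)}$. Summing over $T$ and applying Cauchy--Schwarz in the sum gives
\begin{align*}
\sum_{T\in\cT_h}\int_T f\,(u_h-u_h^*)\,dx\leq \Big(\sum_{T\in\cT_h}h_T^2\|f\|_{L^2(T)}^2\Big)^{1/2}\Big(\sum_{T\in\cT_h}h_T^{-2}\|u_h-u_h^*\|_{L^2(T)}^2\Big)^{1/2}.
\end{align*}
By Lemma~\ref{lem:Error-uh-uh*}, the second factor is bounded by $C\big(\sum_{e\in\cE_h}\Pi_e(\sjump{u_h})^2\big)^{1/2}$. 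But since $\Pi_e(\sjump{u_h^*})=0$ for all $e$, we have $\Pi_e(\sjump{u_h})=\Pi_e(\sjump{u_h-u_h^*})$, so $\sum_{e\in\cE_h}\Pi_e(\sjump{u_h})^2\leq \|u_h-u_h^*\|_{1,h}^2$ directly from the definition \eqref{eq:hnorm} of the mesh-dependent norm. Hence the second factor is $\leq C\|u_h-u_h^*\|_{1,h}$.

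Putting the pieces together: $C\|u_h-u_h^*\|_{1,h}^2\leq C'\big(\sum_T h_T^2\|f\|_{L^2(T)}^2\big)^{1/2}\|u_h-u_h^*\|_{1,h}$, and dividing by $\|u_h-u_h^*\|_{1,h}$ (trivial if it vanishes) and squaring gives the claimed bound with $C^*=(C'/C)^2$. I do not anticipate a serious obstacle here; the only point requiring a little care is making sure the coercivity constant is uniform across the four DG methods under the stability conditions of Table~\ref{table:alpha} --- but that is exactly the content of Lemma~\ref{lem:Coercivity}, so it is available for free. The identification $\Pi_e(\sjump{u_h})=\Pi_e(\sjump{u_h-u_h^*})$ via $\Pi_e(\sjump{u_h^*})=0$ is the one small bookkeeping step that ties Lemma~\ref{lem:Error-uh-uh*} to the target norm.
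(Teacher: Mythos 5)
Your proposal is correct and follows essentially the same route as the paper: coercivity (Lemma~\ref{lem:Coercivity}) combined with the identity of Lemma~\ref{lem:MethodsRelation}, weighted Cauchy--Schwarz, and Lemma~\ref{lem:Error-uh-uh*} together with $\Pi_e(\sjump{u_h^*})=0$ to return to the $\|\cdot\|_{1,h}$ norm. The only cosmetic difference is that you divide by $\|u_h-u_h^*\|_{1,h}$ at the end, whereas the paper concludes with Young's inequality and absorption; the two finishes are equivalent.
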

\begin{proof}
Using Lemma \ref{lem:Coercivity}, Lemma \ref{lem:MethodsRelation},
Cauchy-Schwarz inequality, Lemma \ref{lem:Error-uh-uh*} and
Young's inequality, we find
\begin{align*}
C\|u_h^*-u_h\|_{1,h} &\leq \cA_h(u_h^*-u_h,u_h^*-u_h)=(f,u_h^*-u_h)\\
&\leq \left(\sum_{T\in\cT_h}h_T^2 \|f\|_{L^2(T)}^2\right)^{1/2}\left(\sum_{T\in\cT_h}h_T^{-2} \|u_h^*-u_h\|_{L^2(T)}^2\right)^{1/2}\\
&\leq C \left(\sum_{T\in\cT_h}h_T^2 \|f\|_{L^2(T)}^2\right)^{1/2}\left(\sum_{e\in\cE_h}\Pi_e(\sjump{u_h})^2\right)^{1/2}\\
&\leq \frac{C}{\epsilon}\left(\sum_{T\in\cT_h}h_T^2 \|f\|_{L^2(T)}^2\right)+\epsilon \|u_h^*-u_h\|_{1,h}^2.
\end{align*}
We complete the proof by choosing $\epsilon$ sufficiently small.
\end{proof}
\begin{lemma}\label{lem:CRSolution}
The auxiliary solution $u_h^*$ satisfies
\begin{equation*}
(\nabla_hu_h^*,\nabla_h v_h)=(f,v_h)\quad \forall v_h\,\in V_{CR},
\end{equation*}
i.e, $u_h^*$ is the solution of the classical nonconforming method
\cite{CR:1973:NCFEM}.
\end{lemma}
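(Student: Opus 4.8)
The plan is to test the DG equation \eqref{eq:DGM} with functions from $V_{CR}$ and watch every stabilization term disappear. First I would fix $v_h\in V_{CR}\subset V_h$. By the defining property of $V_{CR}$ we have $\int_e\sjump{v_h}\,ds=0$ for every $e\in\cE_h$, hence $\Pi_e(\sjump{v_h})=0$ on each edge. This kills the penalty term $\sum_{e\in\cE_h}\frac{\alpha}{h_e}\int_e\Pi_e(\sjump{u_h})\Pi_e(\sjump{v_h})\,ds$ present in \eqref{eq:FormIP} and \eqref{eq:FormLDG}, and---because the lifting operators $r$ and $r_e$ defined in \eqref{eq:Lifting}--\eqref{eq:LiftingLocal} are linear and therefore vanish at $0$---it also kills the volume lifting terms $\int_\O r(\Pi_e(\sjump{u_h}))r(\Pi_e(\sjump{v_h}))\,dx$ and $\sum_{e\in\cE_h}\alpha\int_\O r_e(\Pi_e(\sjump{u_h}))r_e(\Pi_e(\sjump{v_h}))\,dx$ appearing in the LDG, Brezzi et al., and Bassi et al. forms. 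So, no matter which of the four bilinear forms \eqref{eq:FormIP}--\eqref{eq:FormBassi} is used, for $v_h\in V_{CR}$ only the consistency skeleton survives:
\begin{equation*}
\cA_h(u_h,v_h)=\sum_{T\in\cT_h}\int_T\nabla u_h\cdot\nabla v_h\,dx-\sum_{e\in\cE_h}\int_e\smean{\nabla u_h}\sjump{v_h}\,ds-\sum_{e\in\cE_h}\int_e\smean{\nabla v_h}\sjump{u_h}\,ds.
\end{equation*}

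Next I would discard the middle term. Since $u_h$ is piecewise linear, $\nabla u_h$ is piecewise constant, so $\smean{\nabla u_h}$ is a fixed vector along each edge $e$; pulling it out of the integral gives $\int_e\smean{\nabla u_h}\sjump{v_h}\,ds=\smean{\nabla u_h}\cdot\int_e\sjump{v_h}\,ds=0$, again by the definition of $V_{CR}$. What remains,
\begin{equation*}
\cA_h(u_h,v_h)=\sum_{T\in\cT_h}\int_T\nabla u_h\cdot\nabla v_h\,dx-\sum_{e\in\cE_h}\int_e\smean{\nabla v_h}\sjump{u_h}\,ds,
\end{equation*}
is precisely the left-hand side of the identity established in Lemma \ref{lem:Relations-uh-uh*}. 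Invoking that lemma rewrites it as $\sum_{T\in\cT_h}\int_T\nabla u_h^*\cdot\nabla v_h\,dx=(\nabla_h u_h^*,\nabla_h v_h)$. Chaining these equalities with \eqref{eq:DGM} gives $(\nabla_h u_h^*,\nabla_h v_h)=\cA_h(u_h,v_h)=(f,v_h)$ for all $v_h\in V_{CR}$, which is exactly the assertion; and since $u_h^*\in V_{CR}$ by construction, this identifies $u_h^*$ with the Crouzeix--Raviart nonconforming approximation, uniqueness being standard from the ellipticity of $(\nabla_h\cdot,\nabla_h\cdot)$ on $V_{CR}$.

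I do not expect any genuine obstacle. The only points that require a moment's care are (i) checking that the argument is uniform over all four methods, which amounts to the observation that every term beyond the consistency skeleton is assembled purely from $\Pi_e(\sjump{\cdot})$---directly, or composed with the linear maps $r$, $r_e$---and hence is annihilated once $\Pi_e(\sjump{v_h})=0$; and (ii) the edgewise constancy of $\smean{\nabla u_h}$, which is where the lowest-order (piecewise-linear) nature of $V_h$ enters. Everything else is a direct substitution of Lemma \ref{lem:Relations-uh-uh*}.
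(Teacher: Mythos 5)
Your proof is correct and follows essentially the same route as the paper: test \eqref{eq:DGM} with $v_h\in V_{CR}$, note that the penalty/lifting terms and the $\smean{\nabla u_h}\sjump{v_h}$ term vanish since $\Pi_e(\sjump{v_h})=0$ and $\smean{\nabla u_h}$ is edgewise constant, and then invoke Lemma \ref{lem:Relations-uh-uh*} to replace the surviving skeleton by $(\nabla_h u_h^*,\nabla_h v_h)$. The paper's proof states the reduced identity without justification, so your version simply makes those vanishing arguments explicit, uniformly over the four bilinear forms.
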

\begin{proof}
Using \eqref{eq:DGM} for any $v_h\in V_{CR}$, we find
\begin{align*}
\sum_{T\in \cT_h} \int_T \nabla u_h\cdot \nabla v_h\,dx
-\sum_{e\in \cE_h}\int_e \smean{\nabla
v_h}\sjump{u_h}\,ds=(f,v_h).
\end{align*}
Then using \eqref{eq:Relations-uh-uh*}, we complete the proof.
\end{proof}
The following {\em a posteriori} error estimator is an easy
consequence of Lemma \ref{lem:JumpsbyVolume} and the results in
\cite{CH:2006:ANCFEM}:
\begin{lemma}
Let $u$ and $u_h$ be the solutions of $\eqref{eq:MP}$ and
$\eqref{eq:DGM}$. Let $u_h^*$ be the auxiliary solution defined in
$\eqref{eq:Def-uh*}$. Then it holds that
\begin{align*}
\|\nabla_h(u-u_h)\|_{L^2(\Omega)} \leq C
\left(\sum_{T\in\cT_h}h_T^2 \|f\|_{L^2(T)}^2+\sum_{e\in
\cE_h^i}\int_e h_e\sjump{\partial u_h^*/\partial s}^2\,ds
\right)^{1/2},
\end{align*}
where $\partial/\partial s$ denotes the tangential derivative
along the edge $e$.
\end{lemma}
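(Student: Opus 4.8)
The plan is to combine the error splitting $u - u_h = (u - u_h^*) + (u_h^* - u_h)$ with a triangle inequality, and then estimate each piece. For the first piece, $u - u_h^*$ is exactly the error of the Crouzeix--Raviart nonconforming approximation by Lemma~\ref{lem:CRSolution}, so one invokes the known residual-type a posteriori bound for the nonconforming method from \cite{CH:2006:ANCFEM}: there one has
\begin{align*}
\|\nabla_h(u-u_h^*)\|_{L^2(\Omega)} \leq C \left(\sum_{T\in\cT_h}h_T^2\|f\|_{L^2(T)}^2 + \sum_{e\in\cE_h^i}\int_e h_e \sjump{\partial u_h^*/\partial s}^2\,ds\right)^{1/2}.
\end{align*}
For the second piece, Lemma~\ref{lem:JumpsbyVolume} gives $\|u_h^*-u_h\|_{1,h}^2 \leq C^*\sum_{T\in\cT_h}h_T^2\|f\|_{L^2(T)}^2$, and since $\|\nabla_h(u_h^*-u_h)\|_{L^2(\Omega)}^2 \leq \|u_h^*-u_h\|_{1,h}^2$ by the definition \eqref{eq:hnorm} of the mesh-dependent norm, this term is already controlled by the volume residual alone. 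Adding the two bounds and absorbing constants yields the claimed estimate.

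Concretely, the steps I would carry out are: first, write $\|\nabla_h(u-u_h)\|_{L^2(\Omega)} \leq \|\nabla_h(u-u_h^*)\|_{L^2(\Omega)} + \|\nabla_h(u_h^*-u_h)\|_{L^2(\Omega)}$; second, apply Lemma~\ref{lem:CRSolution} to identify $u_h^*$ as the nonconforming solution and quote the a posteriori estimate of \cite{CH:2006:ANCFEM} for the first term; third, bound the second term by $\|u_h^*-u_h\|_{1,h}$ via \eqref{eq:hnorm} and then apply Lemma~\ref{lem:JumpsbyVolume}; fourth, use $(a+b)^2 \leq 2a^2 + 2b^2$ (or simply $a + b \leq \sqrt{2}\sqrt{a^2+b^2}$) to combine everything into a single square-root of the sum of the volume residual and the tangential-jump terms, adjusting $C$ as needed.

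The only genuinely nontrivial input is the a posteriori bound for the Crouzeix--Raviart method, which is not proved here but borrowed verbatim from \cite{CH:2006:ANCFEM}; the author flags this by saying the lemma is an ``easy consequence'' of Lemma~\ref{lem:JumpsbyVolume} and the results there. So the main obstacle is essentially bookkeeping: making sure the data oscillation / volume residual term $\sum_T h_T^2\|f\|_{L^2(T)}^2$ appears with the same form in both the cited nonconforming estimate and in Lemma~\ref{lem:JumpsbyVolume}, so that the two can be merged cleanly. There is no subtlety with the penalty parameter $\alpha$ here beyond assuming it lies in the stable range of Table~\ref{table:alpha}, which is exactly what Lemma~\ref{lem:JumpsbyVolume} already requires. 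Everything else is the triangle inequality and elementary manipulation of constants.
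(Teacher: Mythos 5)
Your proposal is correct and follows essentially the same route as the paper: split via the triangle inequality into $u-u_h^*$ and $u_h^*-u_h$, bound the first term by the Crouzeix--Raviart a posteriori estimate of \cite{CH:2006:ANCFEM} (justified by Lemma~\ref{lem:CRSolution}), and bound the second by Lemma~\ref{lem:JumpsbyVolume} using $\|\nabla_h(u_h^*-u_h)\|_{L^2(\Omega)}\leq\|u_h^*-u_h\|_{1,h}$. The paper states this proof in one line, and your write-up simply supplies the same steps in more detail.
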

\begin{proof}
First using triangle inequality
\begin{equation*}
\|\nabla_h(u-u_h)\|_{L^2(\Omega)} \leq
\|\nabla_h(u-u_h^*)\|_{L^2(\Omega)}
+\|\nabla_h(u_h^*-u_h)\|_{L^2(\Omega)},
\end{equation*}
and then using Lemma \ref{lem:JumpsbyVolume} and the results in
\cite{CH:2006:ANCFEM}, we complete the proof.
\end{proof}
\section{Convergence of Adaptive DG
Methods}\label{sec:Convergence}
Let $u_h\in V_h$ be the solution of any of the DG method
\eqref{eq:DGM} and let $u_h^*\in V_{CR}$ be the auxiliary solution
constructed in \eqref{eq:Def-uh*}. Using Lemma
\ref{lem:CRSolution}, recall that $u_h^*$ is the solution of the
Crouzeix-Raviart nonconforming method and Lemma
\ref{lem:JumpsbyVolume} implies that there exists a positive
constant $C^*>0$ such that
\begin{align*}
\|u_h^*-u_h\|_h^2+\sum_{e\in\cE_h}\Pi_e(\sjump{u_h})^2 \leq C^*
\|hf\|^2,
\end{align*}
where hereafter $\|\cdot\|_h=\|\nabla_h\cdot\|$ and
$$\|hf\|^2=\sum_{T\in\cT_h}h_T^2 \|f\|_{L^2(T)}^2.$$
\par
\noindent Let  $\cT_h$ be the conforming refinement of $\cT_H$
obtained by refining the all the marked elements in $\cT_H$ that
are marked in the step {\bf MARK}. The functions with $h$ (resp.
$H$) suffix corresponds to the mesh $\cT_h$ (resp. $\cT_H$).
Below, we consider separately two different marking strategies
that are introduced by Carstensen and Hoppe \cite{CH:2006:ANCFEM}
and by Becker, Mao and Shi \cite{BMS:2010:ANFEM} and prove the
error reduction for both the algorithms separately.
\par\smallskip
\noindent {\em Marking by Carstensen and Hoppe
\cite{CH:2006:ANCFEM}:

Given the universal constants  with $0 <\Theta, \rho_2 < 1$, the
outcome of MARK is a set of edges $\cM \subset\cE_H$ such that
\begin{equation}\label{eq:MarkingCH}
\Theta \sum_{e\in \cE_H^i}\int_e H_e\sjump{\partial u_H^*/\partial
s}^2\,ds \leq \sum_{e\in \cM}\int_e H_e\sjump{\partial
u_H^*/\partial s}^2\,ds.
\end{equation}
The refined regular triangulation $\cT_h$ from REFINE generated by
refining at least all the edges in $\cM$ (and possibly further
edges to avoid hanging nodes) with the new mesh-size $h< H$ is
supposed to satisfy
\begin{equation*}
\rho_2\|Hf\|_{L^2(\Omega)}^2\leq \|hf\|_{L^2(\Omega)}^2.
\end{equation*}}
\par
The results in Carstensen  and Hoppe \cite{CH:2006:ANCFEM} imply
that there exists $0<\rho_1<1$  and $C_1>0$ such that
\begin{align}
\|u-u_h^*\|_h^2 &\leq \rho_1 \|u-u_H^*\|_H^2 +C_1 \|Hf\|^2,\label{eq:NC1}\\
\|hf\|^2 &\leq \rho_2 \|Hf\|^2. \label{eq:NC2}
\end{align}
\par
\noindent In the following theorem, we derive the contraction
property for the adaptive DG methods \eqref{eq:DGM} using the
marking strategy by Carstensen  and Hoppe \eqref{eq:MarkingCH}.
\begin{theorem}\label{thm:ContractionCH}
Let the marking be done by $\eqref{eq:MarkingCH}$. Then there
exists $\gamma>0$ and $0<\rho^* <1$ such that
$$\|u-u_h\|_h^2+\gamma \|hf\|^2 \leq \rho^*\, \left(\|u-u_H\|_H^2+\gamma \|Hf\|^2\right).$$
\end{theorem}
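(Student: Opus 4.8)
The plan is to bound $\|u-u_h\|_h^2$ and $\|hf\|^2$ by the corresponding quantities built from the auxiliary solution $u_h^*$, apply the known nonconforming contraction \eqref{eq:NC1}–\eqref{eq:NC2}, and then translate back. First I would use the triangle inequality together with Lemma \ref{lem:JumpsbyVolume} to write, for any $\delta>0$,
\begin{align*}
\|u-u_h\|_h^2 \leq (1+\delta)\|u-u_h^*\|_h^2 + (1+\delta^{-1})\|u_h^*-u_h\|_h^2 \leq (1+\delta)\|u-u_h^*\|_h^2 + (1+\delta^{-1})C^*\|hf\|^2,
\end{align*}
and symmetrically $\|u-u_H^*\|_H^2 \leq (1+\delta)\|u-u_H\|_H^2 + (1+\delta^{-1})C^*\|Hf\|^2$. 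These two estimates are what let us pass between the DG error and the nonconforming error in both directions at the cost of controllable $\|hf\|^2$, $\|Hf\|^2$ terms; this is exactly why Lemma \ref{lem:JumpsbyVolume} (contraction only needs stability of the method) is the crucial ingredient.

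Next I would form the combined quantity $\|u-u_h\|_h^2 + \gamma\|hf\|^2$ and chain the inequalities: apply the first display to replace $\|u-u_h\|_h^2$ by $(1+\delta)\|u-u_h^*\|_h^2 + (1+\delta^{-1})C^*\|hf\|^2$; apply \eqref{eq:NC1} to replace $\|u-u_h^*\|_h^2$ by $\rho_1\|u-u_H^*\|_H^2 + C_1\|Hf\|^2$; apply the second display to replace $\|u-u_H^*\|_H^2$ by $(1+\delta)\|u-u_H\|_H^2 + (1+\delta^{-1})C^*\|Hf\|^2$; and finally use \eqref{eq:NC2} to absorb the leftover $\|hf\|^2$ into $\rho_2\|Hf\|^2$. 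Collecting terms this yields an estimate of the form
\begin{align*}
\|u-u_h\|_h^2 + \gamma\|hf\|^2 \leq \rho_1(1+\delta)^2\|u-u_H\|_H^2 + \Big(\text{const}(\delta,\gamma,C^*,C_1,\rho_2)\Big)\|Hf\|^2,
\end{align*}
and the goal is to choose $\delta$ small and $\gamma$ large enough that the coefficient of $\|u-u_H\|_H^2$ is $<1$ and the coefficient of $\|Hf\|^2$ is $\leq\gamma\rho^*$ for some $\rho^*<1$.

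The delicate point — and the main obstacle — is the simultaneous choice of $\delta$ and $\gamma$: the $\|Hf\|^2$ coefficient contains a term $(1+\delta^{-1})C^*\rho_2\gamma$ coming from absorbing $\|hf\|^2$, so one cannot simply send $\gamma\to\infty$. The standard remedy is to first fix $\delta$ so that $\rho_1(1+\delta)^2 =: \bar\rho_1 < 1$ (possible since $\rho_1<1$), then observe the $\|Hf\|^2$ coefficient has the shape $A + B\gamma$ with $A$ depending only on $\delta$ and $B = (1+\delta^{-1})C^*\rho_2$; we need $A + B\gamma \leq \gamma\rho^*$, i.e. $\gamma(\rho^*-B) \geq A$, which is solvable for large $\gamma$ precisely when $B = (1+\delta^{-1})C^*\rho_2 < 1$. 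Since $\rho_2<1$ is one of the universal constants in the marking strategy and is at our disposal, I would additionally require $\rho_2$ small enough (equivalently, refine enough at the REFINE step) that $(1+\delta^{-1})C^*\rho_2 < 1$ for the already-fixed $\delta$; then pick $\rho^*\in(\max\{\bar\rho_1, (1+\delta^{-1})C^*\rho_2\},1)$ and $\gamma = A/(\rho^*-(1+\delta^{-1})C^*\rho_2)$, which closes the argument. I would finish by writing out the final collection of constants to confirm $\rho^*<1$ with these choices.
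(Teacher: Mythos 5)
Your decomposition and chain of estimates are exactly the paper's: triangle inequality plus Young's inequality together with Lemma \ref{lem:JumpsbyVolume} to pass between $u_h$ and $u_h^*$ (and between $u_H$ and $u_H^*$), then \eqref{eq:NC1} for the nonconforming contraction and \eqref{eq:NC2} to absorb the leftover $\|hf\|^2$. However, your ``delicate point'' paragraph contains a genuine error in the bookkeeping of the $\gamma$-dependent term, and the remedy you build on it is both unnecessary and changes the hypotheses. Tracing your own chain: the $\|hf\|^2$ terms before invoking \eqref{eq:NC2} have total coefficient $(1+\delta^{-1})C^*+\gamma$, since the contribution $(1+\delta^{-1})C^*\|hf\|^2$ from Lemma \ref{lem:JumpsbyVolume} is \emph{not} multiplied by $\gamma$; only the original $\gamma\|hf\|^2$ carries $\gamma$. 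Hence after \eqref{eq:NC2} the coefficient of $\|Hf\|^2$ has the form $A(\delta)+\rho_2\,\gamma$ with
$A(\delta)=(1+\delta)\rho_1(1+\delta^{-1})C^*+(1+\delta)C_1+\rho_2(1+\delta^{-1})C^*$, i.e.\ $B=\rho_2$, not $B=(1+\delta^{-1})C^*\rho_2$. Since $\rho_2<1$ is already guaranteed by the marking, the condition $A+\rho_2\gamma\le\rho^*\gamma$ is solvable for any $\rho^*\in(\rho_2,1)$ by taking $\gamma$ large, e.g.\ $\rho^*\ge(1+\rho_2)/2$ and $\gamma\ge 2A/(1-\rho_2)$; no smallness of $\rho_2$ is required.

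Because of the miscounted coefficient, your proposed fix --- demanding $(1+\delta^{-1})C^*\rho_2<1$ by choosing $\rho_2$ small (``refining enough'') --- introduces an additional assumption on the adaptive algorithm that depends on the non-explicit stability constant $C^*$ and on $\delta$, which the theorem does not make: the statement holds for the given universal constants $0<\Theta,\rho_2<1$. As written, your argument therefore proves only a weaker, conditional version of the theorem. With the corrected bookkeeping your proof becomes precisely the paper's: fix $\epsilon$ (your $\delta$) with $(1+\epsilon)^2\rho_1<1$, choose $\gamma\ge 2\big[(1+\epsilon)\rho_1\big(C^*(1+1/\epsilon)+C_1\big)+C^*(1+1/\epsilon)\rho_2\big]/(1-\rho_2)$, and take $\rho^*=\max\{(1+\epsilon)^2\rho_1,(1+\rho_2)/2\}$ (the paper writes $\min$, but $\max$ is what the final comparison requires).
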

\begin{proof}
Let $\epsilon>0$. Using triangle inequality and Young's
inequality, we find
\begin{align*}
\|u-u_h\|_h^2+\gamma\|hf\|^2 \leq (1+\epsilon) \|u-u_h^*\|_h^2+ (1+1/\epsilon)\|u_h-u_h^*\|_h^2+\gamma \|hf\|^2.
\end{align*}
Using Lemma \ref{lem:JumpsbyVolume}, we obtain
\begin{align*}
\|u-u_h\|_h^2+\gamma\|hf\|^2 \leq (1+\epsilon) \|u-u_h^*\|_h^2+ \big(C^*(1+1/\epsilon)+\gamma\big)\|hf\|^2.
\end{align*}
Using the error reduction for $u_h^*$ \eqref{eq:NC1}, we find
\begin{align*}
\|u-u_h\|_h^2+\gamma\|hf\|^2 \leq (1+\epsilon)\rho_1 \big( \|u-u_H^*\|_H^2+C_1 \|Hf\|^2\big)+ \big(C^*(1+1/\epsilon)+\gamma\big)\|hf\|^2.
\end{align*}
Again using triangle inequality, Young's inequality and lemma
\ref{lem:JumpsbyVolume}, we find
\begin{align*}
\|u-u_h\|_h^2+\gamma\|hf\|^2 &\leq (1+\epsilon)\rho_1 \big( (1+\epsilon)\|u-u_H\|_H^2+C^*(1+1/\epsilon)\|Hf\|^2+C_1 \|Hf\|^2\big)\\
& \quad +\big(C^*(1+1/\epsilon)+\gamma\big)\|hf\|^2.
\end{align*}
Therefore
\begin{align*}
\|u-u_h\|_h^2+\gamma\|hf\|^2 &\leq (1+\epsilon)^2\rho_1 \|u-u_H\|_H^2+(1+\epsilon)\rho_1 \big(C^*(1+1/\epsilon)+C_1\big)\|Hf\|^2\\
&+ \big(C^*(1+1/\epsilon)+\gamma\big)\|hf\|^2.
\end{align*}
We now use \eqref{eq:NC2} and find
\begin{align*}
\|u-u_h\|_h^2+\gamma\|hf\|^2 &\leq (1+\epsilon)^2\rho_1 \|u-u_H\|_H^2+(1+\epsilon)\rho_1 \big(C^*(1+1/\epsilon)+C_1\big)\|Hf\|^2\\
& \quad +\big(C^*(1+1/\epsilon)+\gamma\big)\rho_2 \|Hf\|^2.
\end{align*}
By simplifying
\begin{align*}
\|u-u_h\|_h^2+\gamma\|hf\|^2 &\leq (1+\epsilon)^2\rho_1
\|u-u_H\|_H^2\\&\quad +\left[(1+\epsilon)\rho_1
\big(C^*(1+1/\epsilon)+C_1\big)+
\big(C^*(1+1/\epsilon)\big)\rho_2+\gamma\rho_2 \right]\|Hf\|^2.
\end{align*}
Choosing $\gamma$ sufficiently large such that
\begin{equation*}
\left[(1+\epsilon)\rho_1 \big(C^*(1+1/\epsilon)+C_1\big)+ \big(C^*(1+1/\epsilon)\big)\rho_2+\gamma\rho_2 \right] \leq \gamma (1+\rho_2)/2.
\end{equation*}
That is by choosing $\gamma$ such that
\begin{equation*}
2\left[(1+\epsilon)\rho_1 \big(C^*(1+1/\epsilon)+C_1\big)+ \big(C^*(1+1/\epsilon)\big)\rho_2\right]/(1-\rho_2) \leq \gamma,
\end{equation*}
we complete the proof by $\rho^*=\min\{(1+\epsilon)^2\rho_1,(1+\rho_2)/2\}$ and $\epsilon$ such that $(1+\epsilon)^2\rho_1<1$.
\end{proof}
\par
\noindent {\em Marking by Becker, Mao and Shi
\cite{BMS:2010:ANFEM}:  Choose the parameters $0<\theta,\sigma<1$
and $\gamma$. Then the out come of MARK step is the set of edges or elements according to the following: \\
If $\|Hf\|_{L^2(\Omega)}^2 \leq \gamma \sum_{e\in \cE_H^i}\int_e
H_e\sjump{\partial u_H^*/\partial s}^2\,ds$, then mark a subset
$\cM\subset \cE_h^i$ with minimal cardinality such that
\begin{equation}\label{eq:MarkingBMSj}
\theta \sum_{e\in \cE_H^i}\int_e H_e\sjump{\partial u_H^*/\partial
s}^2\,ds \leq \sum_{e\in \cM}\int_e H_e\sjump{\partial
u_H^*/\partial s}^2\,ds,
\end{equation}
else find a subset $\cM_1\subset \cT_H$ with minimal cardinality
such that
\begin{equation}\label{eq:MarkingBMSv}
\sigma \sum_{T\in \cT_H}H_T^2 \|f\|_{L^2(T)}^2 \leq \sum_{T\in
\cM_1} H_T^2 \|f\|_{L^2(T)}^2.
\end{equation}
}


\par
The result \cite{BMS:2010:ANFEM} by Becker, Mao and Shi is that
there exist $0<\rho <1$  and $\beta^*>0$ such that
\begin{align}
\|u-u_h^*\|_h^2+\beta\|hf\|^2 &\leq \rho \big( \|u-u_H^*\|_H^2
+\beta \|Hf\|^2 \big),\label{eq:NC3}
\end{align}
for all $\beta$ such that $\beta\geq \beta^*$.
\par
\noindent Below, we prove the convergence of adaptive DG methods
under the Becker, Mao and Shi marking
\eqref{eq:MarkingBMSj}-\eqref{eq:MarkingBMSv}.

\begin{theorem}\label{thm:ContractionBMS} Suppose that the marking
is done by $\eqref{eq:MarkingBMSj}$-$\eqref{eq:MarkingBMSv}$. Then
there exists $\gamma>0$ and $0<\rho^* <1$ such that
$$\|u-u_h\|_h^2+\gamma \|hf\|^2 \leq \rho^*\, \left(
\|u-u_H\|_H^2+\gamma \|Hf\|^2\right).$$
\end{theorem}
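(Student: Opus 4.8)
The plan is to follow the same template as the proof of Theorem~\ref{thm:ContractionCH}, but now to feed in the \emph{combined} contraction estimate \eqref{eq:NC3} for the auxiliary (Crouzeix--Raviart) solution as a single quantity, since under the Becker--Mao--Shi marking the error and the data term contract together rather than through the two separate inequalities \eqref{eq:NC1}--\eqref{eq:NC2}. Throughout, $\epsilon>0$ and $\gamma>0$ are free parameters to be fixed at the very end, and I would repeatedly use the elementary bound $\|v\|_h^2\le\|v\|_{1,h}^2$ together with Lemma~\ref{lem:JumpsbyVolume}, which gives $\|u_h-u_h^*\|_h^2\le C^*\|hf\|^2$ (and likewise on $\cT_H$).

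First I would split the DG error by the triangle inequality and Young's inequality and absorb the post-processing error via Lemma~\ref{lem:JumpsbyVolume}:
\[
\|u-u_h\|_h^2+\gamma\|hf\|^2\le(1+\epsilon)\,\|u-u_h^*\|_h^2+\big((1+1/\epsilon)C^*+\gamma\big)\|hf\|^2 .
\]
Setting $\beta:=\big((1+1/\epsilon)C^*+\gamma\big)/(1+\epsilon)$, the right-hand side is exactly $(1+\epsilon)\big(\|u-u_h^*\|_h^2+\beta\|hf\|^2\big)$; provided $\gamma$ is chosen large enough that $\beta\ge\beta^*$, estimate \eqref{eq:NC3} applies and yields
\[
\|u-u_h\|_h^2+\gamma\|hf\|^2\le(1+\epsilon)\rho\big(\|u-u_H^*\|_H^2+\beta\|Hf\|^2\big).
\]

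Next I would pass from $u_H^*$ back to $u_H$ on the coarse mesh, again using the triangle inequality, Young's inequality and Lemma~\ref{lem:JumpsbyVolume}, in the form $\|u-u_H^*\|_H^2\le(1+\epsilon)\|u-u_H\|_H^2+(1+1/\epsilon)C^*\|Hf\|^2$. Substituting this and recalling $(1+\epsilon)\beta=(1+1/\epsilon)C^*+\gamma$ collapses the bound to
\[
\|u-u_h\|_h^2+\gamma\|hf\|^2\le(1+\epsilon)^2\rho\,\|u-u_H\|_H^2+\Big(\rho\gamma+(2+\epsilon)(1+1/\epsilon)\rho C^*\Big)\|Hf\|^2 .
\]
To finish, I would first fix $\epsilon$ small enough that $(1+\epsilon)^2\rho<1$, then choose $\rho^*$ with $(1+\epsilon)^2\rho<\rho^*<1$ (so in particular $\rho^*>\rho$), and finally take $\gamma$ large enough to satisfy simultaneously $\beta=\big((1+1/\epsilon)C^*+\gamma\big)/(1+\epsilon)\ge\beta^*$ and $(2+\epsilon)(1+1/\epsilon)\rho C^*\le(\rho^*-\rho)\gamma$; the latter is solvable precisely because $\rho^*>\rho$. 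With these choices each coefficient on the right is bounded by $\rho^*$ times the corresponding coefficient on the left, which is the claim.

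The only genuinely delicate point --- and the thing to watch --- is the double role of $\gamma$: it must be large enough that the induced weight $\beta$ is an admissible choice in \eqref{eq:NC3} (i.e. $\beta\ge\beta^*$), and at the same time large enough to dominate the fixed data-oscillation constant $(2+\epsilon)(1+1/\epsilon)\rho C^*$ appearing in the $\|Hf\|^2$ term after the contraction has been applied. One checks that both requirements are lower bounds on $\gamma$, so any sufficiently large $\gamma$ works; everything else is the routine triangle/Young bookkeeping already carried out in Theorem~\ref{thm:ContractionCH}.
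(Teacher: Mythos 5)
Your proposal is correct and follows essentially the same route as the paper's own proof: the same triangle/Young splitting with Lemma \ref{lem:JumpsbyVolume}, the same identification of $\beta=\big(C^*(1+1/\epsilon)+\gamma\big)/(1+\epsilon)\ge\beta^*$ so that \eqref{eq:NC3} applies, the same return to $u_H$ on the coarse mesh, and the same final choice of $\epsilon$ small and $\gamma$ large (your condition $(2+\epsilon)(1+1/\epsilon)\rho C^*\le(\rho^*-\rho)\gamma$ is exactly the paper's condition with $\rho_2$ renamed $\rho^*$). Your explicit remark that both constraints on $\gamma$ are lower bounds, hence simultaneously satisfiable, is precisely the point the paper relies on.
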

\begin{proof}
Let $\epsilon>0$. Using triangle inequality and Young's
inequality, we find
\begin{align*}
\|u-u_h\|_h^2+\gamma\|hf\|^2 \leq (1+\epsilon) \|u-u_h^*\|_h^2+ (1+1/\epsilon)\|u_h-u_h^*\|_h^2+\gamma \|hf\|^2.
\end{align*}
Using Lemma \ref{lem:JumpsbyVolume}, we obtain
\begin{align*}
\|u-u_h\|_h^2+\gamma\|hf\|^2 \leq (1+\epsilon) \|u-u_h^*\|_h^2+
\big(C^*(1+1/\epsilon)+\gamma\big)\|hf\|^2,
\end{align*}
or equivalently
\begin{align*}
\|u-u_h\|_h^2+\gamma\|hf\|^2 \leq (1+\epsilon) \Big(\|u-u_h^*\|_h^2+ \big(C^*(1+1/\epsilon)+\gamma\big)/(1+\epsilon)\|hf\|^2\Big).
\end{align*}
Assume that $\gamma$ is sufficiently large such that
\begin{equation}\label{eq:Cond1}
\big(C^*(1+1/\epsilon)+\gamma\big)/(1+\epsilon) =: \beta\geq
\beta^*.
\end{equation}
Then, using \eqref{eq:NC3} we arrive at
\begin{align*}
\|u-u_h\|_h^2+\gamma\|hf\|^2 \leq (1+\epsilon)\rho \big(
\|u-u_H^*\|_H^2+\beta \|Hf\|^2\big).
\end{align*}
Again using triangle inequality and Young's inequality and Lemma
\ref{lem:JumpsbyVolume}, we find
\begin{align*}
\|u-u_h\|_h^2+\gamma\|hf\|^2 &\leq (1+\epsilon)\rho \left[
(1+\epsilon)\|u-u_H\|_H^2+C^*(1+1/\epsilon)\|Hf\|^2+\beta
\|Hf\|^2\right].
\end{align*}
Therefore
\begin{align*}
\|u-u_h\|_h^2+\gamma\|hf\|^2 &\leq (1+\epsilon)^2\rho
\|u-u_H\|_H^2+(1+\epsilon)\rho \big(C^*(1+1/\epsilon)+\beta
\big)\|Hf\|^2.
\end{align*}
First note that we can choose $\epsilon$ sufficiently small such
that $(1+\epsilon)^2\rho<1$. Then the proof will be completed if
we can show that there is some $0<\rho_2<1$ such that
\begin{equation}\label{eq:Cond2}
(1+\epsilon)\rho \big(C^*(1+1/\epsilon)+\beta \big) \leq \rho_2
\gamma,
\end{equation}
with $\rho^*=\min\{(1+\epsilon)^2\rho, \rho_2\}$.
\par
\noindent Using \eqref{eq:Cond1} in \eqref{eq:Cond2},
\begin{equation*}
(1+\epsilon)\rho C^*(1+1/\epsilon)+\frac{(1+\epsilon)\rho
C^*(1+1/\epsilon)}{(1+\epsilon)}+ \rho\gamma \leq \rho_2 \gamma,
\end{equation*}
equivalently
\begin{equation*}\label{eq:Cond21}
(1+\epsilon)\rho C^*(1+1/\epsilon)+\frac{(1+\epsilon)\rho
C^*(1+1/\epsilon)}{(1+\epsilon)} \leq (\rho_2-\rho) \gamma.
\end{equation*}
The proof is completed by choosing $\rho_2$ such that
$\rho<\rho_2<1$ and $\gamma$ sufficiently large.
\end{proof}

\begin{remark}
Using triangle inequality and Lemma \ref{lem:JumpsbyVolume}, we
find
\begin{align*}
\|u-u_h\|_h &\leq \|u-u_h^*\|+\|u_h^*-u_h\|_h\\ &\leq
\|u-u_h^*\|+C \|hf\|.
\end{align*}
Therefore the adaptive DG methods converge at least at the rate of
adaptive nonconforming method.

\end{remark}

\section{Conclusions}\label{sec:Conclusions}
In this article, we have proved the contraction property for
various symmetric discontinuous Galerkin (DG) methods. Unlike in
the existing works for strongly penalized DG methods, we prove the
convergence of weakly penalized adaptive DG methods without
further assuming the stabilizing parameter is larger than what is
required for stability. Although the analysis in this article is
restricted to the lowest order case, we hope that similar ideas
may be used in higher order cases. We remark that
the convergence analysis of adaptive DG methods using strong jumps is still open
when the stabilizing parameter is chosen just according to the
stability of the method.

\noindent {\bf Acknowledgments}

We would like to thank Mark Ainsworth for many useful discussions.
The work was done while the first author visited Brown University
and Institute for Computational and Experimental Research in
Mathematics (ICERM) with the help of the Indo-US Virtual Institute
of Mathematical and Statistical Sciences. We would like to thank
Govind Menon and Govindan Rangarajan for facilitating this visit.

%

\end{document}